\newcommand{\R}{\mathbb R}
\newcommand{\norm}[1]{\left\lVert #1 \right\rVert}
\newcommand{\diagmon}{misaligned}
\DeclareMathOperator{\conv}{conv}
\DeclareMathOperator{\inte}{int}
\theoremstyle{definition}
\newtheorem{thm}{Theorem}
\newtheorem{lem}[thm]{Lemma}
\newtheorem{prop}[thm]{Proposition}
\newtheorem{obs}[thm]{Observation}
\theoremstyle{remark}
\newtheorem*{remark}{Remark}
\theoremstyle{definition}
\newtheorem{defi}[thm]{Definition}
\title{Total orders realizable as the distances between two sets of points}
\author[G.L. Maldonado]{Gerardo L. Maldonado}
\address[G.L. Maldonado]{Centro de Ciencias Matemáticas, UNAM Campus Morelia, Morelia, Mexico}
\email{gmaldonado@matmor.unam.mx}
\author[E. Roldán-Pensado]{Edgardo Roldán-Pensado}
\address[E. Roldán-Pensado]{Centro de Ciencias Matemáticas, UNAM Campus Morelia, Morelia, Mexico}
\email{e.roldan@im.unam.mx}
\author[M. Raggi]{Miguel Raggi}
\address[M. Raggi]{Escuela Nacional de Estudios Superiores, UNAM Campus Morelia, Morelia, Mexico}
\email{mraggi@enesmorelia.unam.mx}
\keywords{Euclidean distances; Total orders; Convex sets}
\begin{document}

\begin{abstract}
	In this note we give a negative answer to a question proposed by Almendra-Hernández and Martínez-Sandoval. Let $n\le m$ be positive integers and let $X$ and $Y$ be sets of sizes $n$ and $m$ in $\R^{n-1}$ such that every pair of points in $X\cup Y$ defines a unique distance. There is a natural order on $X\times Y$ induced by the distances between the corresponding points. The question is if all possible orders on $X\times Y$ can be obtained in this way. We show that the answer is  negative when $n<m$. The case $n=m$ remains open.
\end{abstract}

\maketitle

\section{Introduction}
A common question in discrete geometry is, given a point set $S$, how many different distances can there be between points of $S$? This is known as the \emph{Erdős distance problem} \cite{GIS2011}. Another related question is how to know when a graph is a unit distance graph, \textit{i.e.}, when it can be realized geometrically in $\R^d$ with edges of unit length \cite{ALON2014}. In the plane, this decision problem is NP-hard; specifically, it is complete for the existential theory of the reals \cite{SCHAEFER2013}.
In this paper we are not focused on exact distances, only on how the edge-lengths are ordered.

Let $n\ge 2$ and $d$ be positive integers and let $P=\{p_0,\dots,p_{n-1}\}$ be a set of points in $\R^d$ in general position (\textit{i.e.} no $d+1$ elements from $P$ lie on the same hyperplane). Assume that the points of $P$ are such that every pair of points in $P$ defines a unique distance. Then $P$ induces a natural order on $\binom{[n]}{2}$ (where $[n]=\{0,\dots,n-1\}$ and $\binom{[n]}{2}$ denotes the ordered pairs $(i,j)\in [n]\times[n]$ with $i<j$), given by
\begin{equation*}
	(i,j) < (k,l) \iff \norm{p_i-p_j} < \norm{p_k-p_l}.
\end{equation*}
We say an order on $\binom{[n]}{2}$ is \emph{realizable in $\R^d$} if there exists a set of points $P\subset\R^d$ in general position which induces this order.

A natural question becomes: Given $d$ and $n$, which orders on $\binom{[n]}{2}$ are realizable in $\R^d$? This is always possible when $n$ is small compared to $d$.	

In \cite{AM22}, the authors study the problem of finding, for a given $n$, the smallest $d$ such that every order on $\binom{[n]}{2}$ can be obtained from some $P\subset \R^d$. They claim that the answer is $d=n-2$.
While they do prove that any order can be realized in $\R^{n-2}$, the order which they claim is unrealizable in $\R^{n-3}$ is in fact realizable.
We show this in Section \ref{sec:coro4}, however this leaves open the question of whether $d= n-2$.

Now assume that $2\le n\le m$ and $d$ are positive integers. Consider sets of points $P=\{p_0,\dots,p_{n-1}\}$ and $Q=\{q_0,\dots,q_{m-1}\}$ in $\R^d$ such that $P$ and $Q$ are in general position and every pair of points in $P\times Q$ determines a unique distance. Then $P$ and $Q$ induce a natural order on $[n]\times [m]$ given by
\begin{equation*}
	(i,j) < (k,l) \iff \norm{p_i-q_j} < \norm{p_k-q_l}.
\end{equation*}
As before, we say an order on $[n]\times [m]$ is \emph{realizable in $\R^d$} if there exist sets of points $P,Q\subset\R^d$ which induce this order.

In this setting, Almendra-Hernández and Martínez-Sandoval proved that the minimum dimension $d$ for which every order on $[n]\times[m]$ is realizable in $\R^d$ satisfies $n-1\le d\le n$. 

Our main contribution is showing that the correct value of $d$ is precisely $n$. We do this by constructing an order on $[n]\times [n+1]$ that is not realizable in $\R^{n-1}$. This statement is made precise in Theorem~\ref{thm:example} below.

The question that remains is whether or not every order on $[n]\times [n]$ can be obtained by a pair of sets in $\R^{n-1}$.
An exhaustive computer analysis shows this to be true when $n\le 3$, but we have not been able to prove it for larger values of $n$. 

This paper is organized as follows. In Section \ref{sec:problem} we state our main results together with the necessary definitions.
In Section \ref{sec:proofs} we give the proof of our main theorem. In Section \ref{sec:coro4} we explain the gap in the proof of one of the theorems from \cite{AM22}. Finally, in Section \ref{sec:compu} we present the results of some computer some experiments related to Theorem \ref{thm:example}.

\section{Definitions and main result statement}\label{sec:problem}

We start by defining a specific type of order on $[d+1]\times[d+2]$. Later we show that these orders cannot be realized in $\R^d$.

\begin{defi}\label{def:order}
	Let $d$ be a positive integer and let $<$ be a total order on $[d+1]\times[d+2]$. We say that order $<$ is \emph{\diagmon} if the following properties are satisfied: 
	\begin{enumerate}
		\item[1.] $(k,k) < (k,k-1)< \dots < (k,k-d)$, for all $k\in [d+1]$,
		\item[2.] $(k,k) < (k+1,k) < \dots < (k+d,k)$, for all $k\in [d+1]$, and
		\item[3.] $(d,d+1) < (d-1,d+1) < \dots < (0,d+1)$,
	\end{enumerate}
	where the entries of the first and second coordinates of the elements of $[d+1]\times[d+2]$ are taken modulo $d+1$ and $d+2$, respectively.
\end{defi}

A useful way of understanding {\diagmon} orders is by using the bijection
\begin{equation*}
	\varphi:[d+1]\times[d+2]\to[(d+1)(d+2)]
\end{equation*}
such that
\begin{equation*}
	(i,j) < (k,l) \iff \varphi((i,j)) < \varphi((k,l)).
\end{equation*}
We may then visualize the order $<$ with a table as in Figure~\ref{fig:table}.

The properties from Definition \ref{def:order} can be interpreted as follows. Property 1 states that in every row, the element on the diagonal is the smallest and increases as we move to the right. After reaching the end of the table, we continue from the leftmost element until we meet the diagonal again. Property 2 is similar but for the first $d+1$ columns. For the last column, Property 3 implies that the elements decrease from top to bottom. Figure \ref{fig:table} shows a visual representation of this.

\begin{figure}
	\begin{center}
		\includegraphics{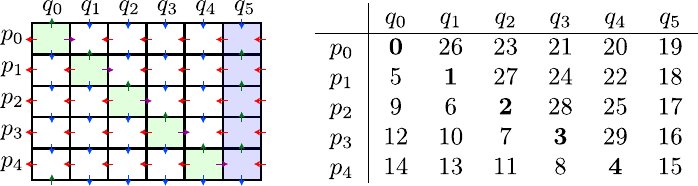}
	\end{center}
	\caption{Left: The arrows represent order restrictions of Definition \ref{def:order}, (as a torus). Right: An example for $d=4$ of a {\diagmon} order.}
	\label{fig:table}
\end{figure}

It is easy to construct examples of {\diagmon} orders: Place the first $d+1$ numbers in the main diagonal of a $(d+1)\times(d+2)$ table (in any order). Next fill in the diagonals below the main diagonal one by one so that each diagonal contains the smallest numbers available. Once that is done, fill in the last column with the next $d+1$ in increasing order from bottom to top. Finally fill in the remaining spaces, one diagonal at a time starting from the top and always using the smallest numbers available for each diagonal. The resulting table satisfies the properties in Definition \ref{def:order}.

\begin{remark}
	Note that this construction produces
	\begin{equation*}
		(d+1)!\left(d!(d-1)!\cdots 1!\right)^2
	\end{equation*}
	{\diagmon} orders. However, this is still quite small compared to the total number of possible orders.
\end{remark}

\begin{thm}\label{thm:example}
	No {\diagmon} order on $[d+1]\times[d+2]$ is realizable in $\R^d$.
\end{thm}

\section{Proof of Theorem~\ref{thm:example}}\label{sec:proofs}

First, let us introduce some standard notation. If $X$ is a finite subset of $\R^d$, denote by $\conv(X)$ the convex hull of $X$. If $|X|=d+1$, denote by $O(X)$ the circumcenter of $X$.

Assume, for the sake of contradiction, that $P=\{p_0,\dots,p_d\}$ and $Q=\{q_0,\dots,q_{d+1}\}$ are subsets of $\R^d$ which induce a {\diagmon} order on $[d+1]\times [d+2]$.

For each $i\in[d+1]$, let $Q_i = Q\setminus \{q_i\}$. Our goal is to arrive at a contradiction by showing that $O(P)$ must be both inside and outside the simplex $\conv(Q_{d+1})$.

We begin by noting some properties that sets $P$ and $Q$ must satisfy as a consequence of inducing a {\diagmon} ordering.
\begin{obs}\label{obs:dm}
	From Definition \ref{def:order} we deduce the following:
	\begin{enumerate}
		\item\label{obs1} Every point in $P$ is closer to $q_0$ than to $q_{d+1}$.
		\item\label{obs2} If $i\in[d]$, then $q_{i+1}$ and $q_{d+1}$ are closer to $p_{i+1}$ than to $p_i$, but every other point in $Q$ is closer to $p_i$.
	\end{enumerate}
\end{obs}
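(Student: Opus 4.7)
The plan is to translate each distance comparison into a comparison in the induced total order via \eqref{eq:order}, and then locate that comparison as a link in one of the three chains supplied by Definition~\ref{def:order}. So ``$q_j$ is closer to $p_k$ than to $p_{k'}$'' becomes ``$(k,j)<(k',j)$'', and similarly for the other direction. Once this dictionary is in place, the argument is a case-by-case identification of links.

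For part (2), fix $i\in[d]$. The assertion that $q_{i+1}$ is closer to $p_{i+1}$ than to $p_i$ is the comparison $(i+1,i+1)<(i,i+1)$, which is the first step of the Property~2 chain applied to column $k=i+1$ (a valid index since $i+1\in[d+1]$). The assertion about $q_{d+1}$ reads $(i+1,d+1)<(i,d+1)$, which is an immediate link of the Property~3 chain $(d,d+1)<(d-1,d+1)<\dots<(0,d+1)$. For any other $j\in[d+2]\setminus\{i+1,d+1\}$, i.e.\ $j\in[d+1]\setminus\{i+1\}$, Property~2 for column $j$ gives the cyclic chain $(j,j)<(j+1,j)<\dots<(j-1,j)\pmod{d+1}$ through all $d+1$ rows; because $j\ne i+1$, row $i$ is not the last element of this chain, so its immediate successor is row $i+1$, yielding $(i,j)<(i+1,j)$.

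For part (1), fix $k\in[d+1]$ and aim for $(k,0)<(k,d+1)$. Property~1 for row $k$ is the chain $(k,k)<(k,k-1)<\dots<(k,k-d)$ (second coordinates taken mod $d+2$); it visits $d+1$ of the $d+2$ columns of row $k$ and omits exactly column $k+1\pmod{d+2}$. For $k\in\{0,1,\dots,d-1\}$ the omitted column lies in $\{1,\dots,d\}$, so both $(k,0)$ and $(k,d+1)$ appear on this chain; counting positions in the descending cyclic walk, $(k,0)$ appears at step $k$ and $(k,d+1)$ at step $k+1$, giving the desired comparison directly. The boundary case $k=d$ is where I expect the main difficulty: row $d$'s chain omits exactly column $d+1$, so Property~1 alone does not compare $(d,d+1)$ with $(d,0)$, and one must close this last case by combining the Property~2 chain for column~$0$, the Property~3 chain, and the already-handled rows $0,\dots,d-1$ through a careful sequence of transitive deductions.
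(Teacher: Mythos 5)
Your dictionary and your treatment of part (2) are correct, as is part (1) for $k\in\{0,\dots,d-1\}$: in row $k$ the Property~1 chain omits exactly column $k+1\pmod{d+2}$, and when $k\le d-1$ it contains $(k,0)$ immediately followed by $(k,d+1)$. The problem is precisely the case $k=d$ that you defer to ``a careful sequence of transitive deductions'': no such sequence exists if the properties are read literally. The element $(d,d+1)$ occurs in exactly one of the generating relations of Definition~\ref{def:order}, namely as the \emph{minimum} of the Property~3 chain; it lies in no Property~1 chain (row $d$'s chain omits column $d+1$) and in no Property~2 chain (those live entirely in columns $0,\dots,d$). Hence no relation of the form $x<(d,d+1)$ is ever generated, $(d,d+1)$ is a minimal element of the induced partial order, and $(d,0)<(d,d+1)$ is not in its transitive closure. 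Concretely, for $d=1$ the total order $(1,2)<(0,0)<(1,1)<(0,1)<(1,0)<(0,2)$ satisfies Properties~1--3 verbatim, yet it would make $p_1$ closer to $q_2$ than to $q_0$, so the route you sketch for closing the last case cannot succeed.

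The statement is only recoverable if the Property~1 chain is taken to run all the way around the row, i.e.\ $(k,k)<(k,k-1)<\dots<(k,k-d)<(k,k-d-1)$, a chain of $d+2$ terms ending at column $k+1$. That is what the paper's verbal gloss (``until we meet the diagonal again'') and its explicit construction of {\diagmon} orders actually produce, and under that reading the missing case becomes simply the last link $(d,0)<(d,d+1)$ of row $d$'s chain, in complete analogy with the cases you already handled. You should either adopt that strengthened reading explicitly, flagging the off-by-one in the displayed formula, or record that part (1) fails for $k=d$ under the definition as written; as it stands your proposal leaves open the one case that genuinely needs an argument, and the repair it gestures at is unavailable.
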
 

Let $M(x,y)$ denote the perpendicular bisector (hyperplane) of segment $\overline{xy}$. Then $M(x,y)$ separates the points that are closer to $x$ than to $y$ from the points that are closer to $y$ than to $x$. In this way, Observation \ref{obs:dm} can be visualized in Figure \ref{fig:obs}. We denote the closed halfspace bounded by $M(x,y)$ which contains $y$ by $M^+(x,y)$.

\begin{figure}
	\begin{center} 
		\includegraphics{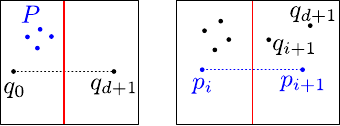} 
	\end{center} 
	\caption{Visualization of Observation~\ref{obs:dm}. The red solid lines are the perpendicular bisectors of the blue dashed segments. Points in $P$ are blue while points from $Q$ are black.}
	\label{fig:obs}
\end{figure} 

The next lemma gives us important properties about the circumcenters of $Q_{d+1}$, $Q_0$ and $P$. This is accomplished using some relevant facts about cones (for more about convex cones, see \cite{GUNTER96} and \cite{BMS97}).

\begin{lem}\label{lem:circum}
	Let $X=\{x_0,\dots,x_d\}$ and $Y=\{y_0,\dots,y_d\}$ be subsets of $\R^d$ in general position such that every pair of points in $X\times Y$ determines a unique distance. Suppose that for every $i$
	\begin{equation*}
		\norm{y_i-x_i}<\norm{y_i-x_{i+1}}<\dots<\norm{y_i-x_{i-1}},
	\end{equation*}
	where the indices are taken mod $d+1$. Then $O(X)$ is in the interior of $\conv(Y)$.
\end{lem}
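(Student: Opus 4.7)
The plan is to translate coordinates so that $O(X)$ sits at the origin, rewrite the distance hypothesis as inner-product inequalities against the edge vectors $e_j := x_j - x_{j+1}$, and then contradict a separating hyperplane by exploiting the single linear relation $\sum_j e_j = 0$.

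After the translation one has $\|x_0\| = \cdots = \|x_d\| = r$ for a common $r$, so expanding $\|y_i - x_j\|^2 - \|y_i - x_{j+1}\|^2$ collapses to $-2\langle y_i, e_j\rangle$. The cyclic chain of inequalities for $y_i$ therefore becomes $\langle y_i, e_j\rangle > 0$ for every $j \neq i-1 \pmod{d+1}$. The vectors $e_0, \dots, e_d$ telescope to zero, and affine independence of $X$ makes any $d$ of them a basis of $\R^d$.

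Now I will suppose, towards a contradiction, that $0 \notin \inte \conv(Y)$. By a standard separation argument there is a nonzero $v$ with $\langle v, y_i\rangle \leq 0$ for every $i$, so it suffices to exhibit a single index where the opposite strict inequality holds. Write $v = \sum_j \gamma_j e_j$; because $\sum_j e_j = 0$, this representation is unique only up to adding the same constant to every $\gamma_j$. Pick $j^*$ that minimizes $\gamma_j$ and put $\beta_j := \gamma_j - \gamma_{j^*}$, so that $\beta_j \geq 0$ for every $j$ and $\beta_{j^*} = 0$. Setting $i := j^* + 1$, we obtain $v = \sum_{j \neq i-1} \beta_j e_j$ and hence
\[
	\langle v, y_i\rangle \;=\; \sum_{j \neq i-1} \beta_j \langle e_j, y_i\rangle.
\]
Every $\langle e_j, y_i\rangle$ appearing on the right is strictly positive by the translated hypothesis, every $\beta_j$ is nonnegative, and at least one of them is strictly positive (otherwise $v$ would vanish); therefore $\langle v, y_i\rangle > 0$, which is the desired contradiction.

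The step I expect to be the main obstacle is identifying the correct index $i$: it is not obvious a priori which $y_i$ will witness $\langle v, y_i\rangle > 0$. The key insight is that the one-parameter freedom in writing $v$ as $\sum \gamma_j e_j$, coming from the relation $\sum e_j = 0$, is precisely the freedom needed to make a single coefficient vanish, and the cyclic structure of the hypothesis then pairs the vanishing index $j^* = i-1$ with a cone on which every remaining $\langle e_j, y_i\rangle$ is strictly positive. The degenerate case where the minimum of the $\gamma_j$'s is attained by several indices is harmless, since any admissible choice of $j^*$ still leaves at least one strictly positive $\beta_j$.
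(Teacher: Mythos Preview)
Your proof is correct and follows essentially the same strategy as the paper: translate $O(X)$ to the origin, use the edge vectors $e_j=x_j-x_{j+1}$ together with the single relation $\sum_j e_j=0$, and contradict a separating hyperplane for $\conv(Y)$. The paper packages the key step in the language of dual cones (showing that the cones $C_i^*=\pos\{e_j:j\neq i-1\}$ cover $\R^d$), while you carry out the equivalent explicit computation by choosing the minimal coefficient in a representation $v=\sum_j\gamma_j e_j$; these are two phrasings of the same argument, and your version handles the strict-versus-nonstrict inequality at the separation step a bit more carefully.
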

\begin{proof}
	For simplicity during this proof, all indices are taken mod $d+1$. Also, assume that $O(X)$ is the origin so that the hyperplanes $M(x_i,x_j)$ pass through the origin.
	
	The hyperplanes $M(x_i,x_j)$ split $\R^d$ into several cones which may be identified in the following way. If $y\in \R^d$ is a point which does not lie in any of the hyperplanes $M(x_i,x_j)$, then we may assign to $y$ the permutation $\sigma = (n_0,n_1,\dots n_d)$ that represents the order in which the distances from $y$ to the points $x_{n_i}$ appear. In other words, $\norm{y-x_{n_0}}<\dots<\norm{y-x_{n_d}}$. Every $y$ in a given cone is assigned the same permutation, and points in distinct cones are assigned distinct permutations, so we may identify each cone by the permutation assigned to any of its points.
	
	By hypothesis, every pair of points in $X\times Y$ determines a unique distance, so we have that each $y_i$ is in the interior of a cone. Let $C_i$ be the cone that contains $y_i$. Then the permutation assigned to $C_i$ is $(i,i+1,\dots,i-1)$. Note that
	\begin{equation*}
		C_i=M^+(x_{i+1},x_{i})\cap M^+(x_{i+2},x_{i+1})\cap\dots\cap M^+(x_{i-1},x_{i-2}).
	\end{equation*}
	
	Recall that if $C$ is a cone, its dual cone is defined as $C^* = \{x : \langle x,y\rangle \geq 0, y\in C\}$.
	Let $v_j=x_{j}-x_{j+1}$ for each $j\in[d+1]$.
	
	Thus, $C_i^*$ is the positive cone generated by the set of vectors $\{x_{i}-x_{i+1},x_{i+1}-x_{i+2},\dots,x_{i-2}-x_{i-1}\}=\{v_0,\dots,v_d\}\setminus\{v_{i-1}\}$ (see Figure~\ref{fig:cono}). 
	
	\begin{figure} 
		\begin{center}   
			\includegraphics{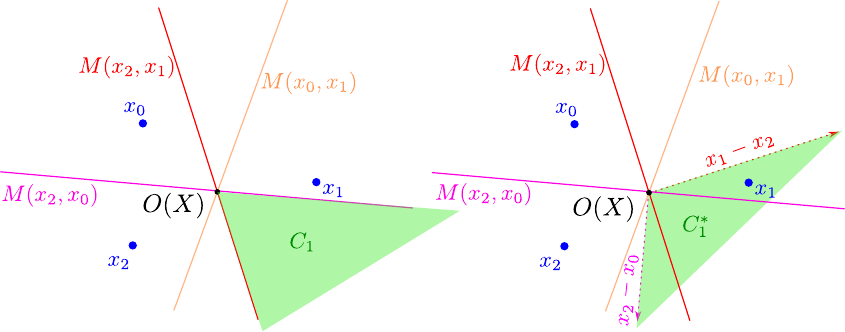}
			\caption{Left: Cone $C_1$, corresponding to the permutation $\sigma = (1,2,0)$, shown. Right: Cone $C_1^*$ shown.}\label{fig:cono}
		\end{center} 
	\end{figure}
	
	The cones $C_i^*$ cover $\R^d$. To see why this is true, observe that, since $X$ is in general position, every subset with $d$ elements from the set $\{v_0,\dots,v_d\}$ is a basis of $\R^d$. Note that $\sum_{i\in[d+1]}v_i=0$ and take $w\in\R^d$, then $w = \sum_{i\in[d]}\lambda_i v_i$ with $\lambda_i\in\R$, for $i\in[d]$. By taking $\lambda>0$ large enough so that $\lambda_i+\lambda>0$ for each $i\in[d]$, we may write $w = \sum_{i\in[d]}(\lambda_i+\lambda) v_i+\lambda v_d$. This shows that $w$ can be written as a linear combination of the $v_i$ with positive coefficients and is therefore in some $C_i^*$.
	
	Assume that $O(X)$ is not in the interior of $\conv(Y)$. Then there is a non-zero vector $v$ such that $\langle v,y_i\rangle \ge 0$ for every $i\in[d+1]$, so $\langle -v,y_i\rangle \le 0$ for every $i\in[d+1]$.
	On the other hand, since each $y_i$ lies in the interior of $C_i$, we have $\langle y_i,c\rangle >0$ for all $c\in C_i^*$. Furthermore, because the dual cones $C_i^*$ cover $\R^d$, there exists some $i\in[d+1]$ for which $-v\in C_i^*$. For this particular $i$, we obtain $\langle y_i,-v\rangle >0$, a contradiction.
\end{proof}

We apply Lemma \ref{lem:circum} three times:
\begin{itemize}
	\item With $x_i=p_i$ and $y_i=q_i$ for $i\in[d+1]$. Here, the hypothesis is satisfied because of Property 2 of Definition \ref{def:order}.
\end{itemize}
For the following two applications, consider Lemma \ref{lem:circum} changing the indices in the hypothesis to:
\begin{equation*}
	\norm{y_i-x_i}<\norm{y_i-x_{i-1}}<\dots<\norm{y_i-x_{i+1}},
\end{equation*}
where the indices are taken mod $d+1$.
\begin{itemize}
	\item With $x_i=q_{i}$ and $y_i=p_i$ for $i\in[d+1]$ (where the indices in $x$ are taken mod $d+1$). Here, the hypothesis is satisfied because of Property 1 of Definition \ref{def:order}.
	\item With $x_0 = q_{d+1}$ and $x_i=q_{i}$, for $i\in[d+1]\setminus{0}$. Also, take $y_i=p_i$ for $i\in[d+1]$. Here, the hypothesis is satisfied because of Property 1 of Definition \ref{def:order}.
\end{itemize}

In this way we obtain that:
\begin{align*}
	O(P)&\in\conv(Q_{d+1}),\\
	O(Q_{d+1})&\in\conv(P)\text{ and}\\
	O(Q_0)&\in\conv(P). 
\end{align*}

Now let $Q' = Q\setminus\{q_0,q_{d+1}\}$ and let $H$ denote the hyperplane spanned by $Q'$. Next we prove that $q_0$ and $q_{d+1}$ must lie in the same open halfspace bounded by $H$.

Consider the circumsphere $S_0$ of $Q_0$. Let $B_{d+1}$ the ball with center $Q_0$ bounded by $S_0$. Notice that $q_{d+1} \in S_0$. 

Analogously, let $S_{d+1}$ be the circumsphere of $Q_{d+1}$ and let $B_{d+1}$ be the ball with center $O(Q_{d+1})$ bounded by $S_{d+1}$. Notice that $q_0 \in S_{d+1}$.

Since both $O(Q_{0})$ and $O(Q_{d+1})$ lie in $\conv(P)$, part (\ref{obs1}) of Observation \ref{obs:dm} implies that both $O(Q_0)$ and $O(Q_{d+1})$ are closer to $q_0$ than to $q_{d+1}$. 

Therefore $q_0 \in B_{0}$ and $q_{d+1} \notin B_{d+1}$. This means that
\begin{align}
	q_{0} & \in S_{d+1}\cap B_{0},\nonumber\\
	q_{d+1} & \in S_{0}\setminus B_{d+1}\label{eq:notin}
\end{align}
and consequently $q_0$ and $q_{d+1}$ are on the same side of $H$ (see Figure~\ref{fig:intersectingspheres}).

\begin{figure}
	\begin{center}
		\includegraphics{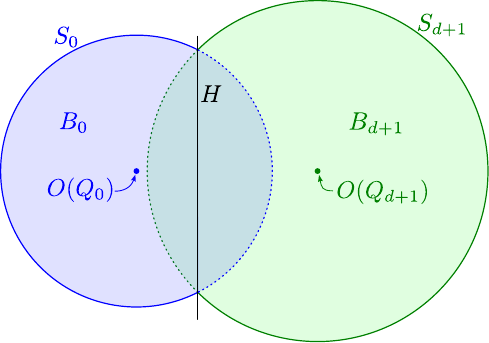}
		\caption{$q_0$ must be in the dashed green line and $q_{d+1}$ must be in the solid blue line, so $q_0$ and $q_{d+1}$ lie on the same side of $H$ as $O(Q_0)$.}
		\label{fig:intersectingspheres}
	\end{center}
\end{figure}

Let $H^+$ be the closed halfspace bounded by $H$ which contains $q_0$ and $q_{d+1}$.

Part (\ref{obs2}) of Observation~\ref{obs:dm} implies that $M(p_i,p_{i+1})$ separates $\{q_{i+1},q_{d+1}\}$ from the rest of $Q$ for each $i\in [d]$ (see Figure \ref{fig:obs}). In order to exploit this we use the following lemma which is a corrected version of Corollary 4 from \cite{AM22} (see Section \ref{sec:coro4}).

\begin{lem}\label{lem:coro4}
	Let $X=\{x_0,\dots, x_d\}\subset\R^d$ be a set of points in general position, $O$ be a point in the interior of $\conv(X)$ and $L_0,\dots,L_d\subset\R^d$ be hyperplanes in general position. For each $i\in[d+1]$, define $L_i^+$ as a closed halfspace bounded by $L_i$ that contains $O$ and define $L_i^-$ as the closure of its complement. Assume that $O\not\in L_0$ and
	\begin{equation*}
		x_i\in \left(\bigcap_{j\neq i} L_j^-\right)\cap L_i^+.
	\end{equation*}
	Then
	\begin{equation*}
		O\in\bigcap_{i\in[d+1]}L_i^+\subset\conv(X).
	\end{equation*}
\end{lem}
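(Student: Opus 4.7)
The plan is to establish the non-trivial inclusion $\bigcap_{i \in [d+1]} L_i^+ \subseteq \conv(X)$; the first part of the conclusion, $O \in \bigcap_i L_i^+$, is immediate since each $L_i^+$ is by definition the closed halfspace containing $O$ (the stipulation $O\notin L_0$ pins down which side of $L_0$ is $L_0^+$). I will argue the inclusion by contradiction, combining the sign structure imposed on $X$ by the halfspaces with barycentric coordinates on the simplex $\conv(X)$.

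For each $i \in [d+1]$, fix an affine function $f_i \colon \R^d \to \R$ with $L_i = \{f_i = 0\}$ and $L_i^+ = \{f_i \geq 0\}$; the hypotheses then read $f_i(x_i) \geq 0$ and $f_j(x_i) \leq 0$ for $j \neq i$. Suppose $p \in \bigcap_i L_i^+$ but $p \notin \conv(X)$. Because $O \in \inte \conv(X)$ and $p$ lies outside, the segment $[O,p]$ crosses $\partial \conv(X)$ at some point $q \neq O$, and by convexity $q \in \bigcap_i L_i^+$. Since $\conv(X)$ is a $d$-simplex, $q$ lies on some facet, which we take to be the one opposite $x_k$, yielding $q = \sum_{i \neq k} \nu_i x_i$ with $\nu_i \geq 0$ and $\sum_{i\neq k}\nu_i = 1$. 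Evaluating $f_k$ and using $f_k(x_i)\leq 0$ for $i \neq k$ gives $f_k(q) = \sum_{i \neq k} \nu_i f_k(x_i) \leq 0$, while $q \in L_k^+$ forces $f_k(q) \geq 0$; hence $f_k(q) = 0$ and $\nu_i f_k(x_i) = 0$ for every $i \neq k$.

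The main obstacle is to rule out the degenerate possibility that some $x_i$ with $i \neq k$ lies on $L_k$, which is the only way for the sign argument to leave slack. Under the general position hypothesis (which in the application to Theorem~\ref{thm:example} is reinforced by the unique-distances assumption, making each $x_i$ strictly miss every $L_j$), the inequality $f_k(x_i) < 0$ is strict for $i \neq k$, so every $\nu_i$ must vanish, contradicting $\sum_{i\neq k} \nu_i = 1$. I expect this strictness point to be the only place requiring care; the rest is the clean segment-to-boundary argument sketched above.
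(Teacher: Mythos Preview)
Your argument follows essentially the same path as the paper's proof up to the point where you locate the boundary point $q$ on a facet of $\conv(X)$ and deduce $f_k(q)=0$. The divergence is in how the contradiction is extracted from $f_k(q)=0$, and here there is a genuine gap.

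You claim that ``general position'' forces $f_k(x_i)<0$ strictly for $i\neq k$, and you bolster this by saying that in the application the unique-distances assumption makes each $x_i$ strictly miss every $L_j$. Neither assertion holds. The lemma's hypotheses impose general position on the $x_i$ among themselves and on the $L_j$ among themselves; they say nothing about incidences between the two families, so $x_i\in L_k$ is not excluded by the statement. Worse, in the paper's own application one takes $x_i=q_i$ and $L_0=H$, the affine hyperplane spanned by $q_1,\dots,q_d$; thus $f_0(q_i)=0$ for every $i\in\{1,\dots,d\}$, and your strictness argument collapses precisely when $k=0$. The unique-distances hypothesis only keeps the $q_i$ off the perpendicular bisectors $L_1,\dots,L_d$, not off $L_0$.

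The paper sidesteps this by a different device: before choosing the boundary point, it replaces your $p$ by a point $y$ in the \emph{interior} of $L=\bigcap_i L_i^+$ (which is nonempty because the $L_i$ are in general position, and one may do so since $\conv(X)$ is closed). Then $q$ lies strictly between $O$ and $y$, and $f_k(q)=0$ together with $f_k\ge 0$ on the whole segment forces $f_k$ to vanish identically on it, hence $f_k(y)=0$, contradicting $y\in\inte(L)$. This avoids any need for strict inequalities at the vertices $x_i$ and is the idea missing from your write-up.
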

\begin{figure}
	\begin{center}   
		\includegraphics{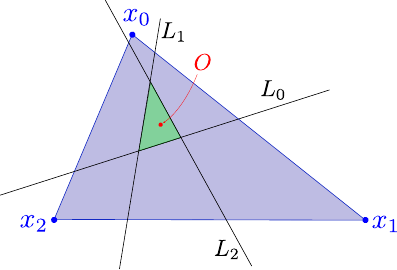}
		\caption{Example of the Lemma \ref{lem:coro4} in the plane. The green intersection must lie inside the blue simplex.}\label{fig:lem5}   
	\end{center}
\end{figure}
This Lemma is exemplified in Figure~\ref{fig:lem5}.
\begin{proof}
	The statement is trivial for $d=1$, so assume $d>1$. Let $L:=\bigcap_{i\in[d+1]} L_i^+$. Assume to the contrary that there exists a point $y$ in $L$ with $y\notin \conv(X)$. Note that we may assume $y\in \inte(L)$, since $L$ is a non-empty convex set, the hyperplanes $L_i$ are in general position (so $\inte(L)$ is not empty), and $\conv(X)$ is closed.
	
	As $y\not\in\conv(X)$ but $O \in \conv(X)$, there exists a point $z\in \inte(L)$ in the segment between $O$ and $y$ which lies in the boundary of $\conv(X)$. Therefore there is some $j\in[d+1]$ such that $z$ is a convex combination of the $x_i$ with $i\neq j$.
	
	Since $x_i\in L_j^-$ for $i\neq j$, we conclude that $z\in L_j^-$. But the segment between $O$ and $y$ is contained in $L_j^+$. The only way it can have a point, besides $O$ and $y$, in $L_j^-$ is if $O,y\in L_j$ which contradicts our choice of $y$.
\end{proof}

As consequence of Lemma \ref{lem:circum}, $O(P)$ is contained in the interior of $\conv(Q_{d+1})$ and therefore $O(P)\not\in H$. Then, we can apply Lemma \ref{lem:coro4} with $O=O(P)$, $x_i=q_i$, $L_0=H$ and $L_i=M(p_{i-1},p_i)$ to conclude that $q_{d+1}\in\conv(Q_{d+1})\subset B_{d+1}$, which contradicts \eqref{eq:notin}.
\qed

\section{On the upper bound from \cite{AM22}}\label{sec:coro4}

Corollary 4 from \cite{AM22} is false as stated. The induction argument they use fails because the base hypothesis no longer holds after the first step.
Consequently, their Proposition 5 is also false. They use this proposition in order to assert that not all orders on $\binom{[d+3]}{2}$ are realizable in $\R^d$.
We show that the orders they give are in fact realizable in $\R^d$ for every $d\ge 4$ and so the question of finding if every order on $\binom{[d+3]}{2}$ are realizable in $\R^d$ remains open.

\begin{prop}[Counterexample to Proposition 5 from \cite{AM22}]
	Let $d\ge 4$. Then there is a set $P=\{p_0,\dots,p_{d+2}\}$ of points in $\R^d$ in general position such that each pair of distinct points in $P$ defines a unique distance and the order that $P$ induces on $\binom{[d+3]}{2}$ satisfies the following conditions:
	\begin{enumerate}
		\item for any $(i, j)\in \binom{[d]}{2}$ and $(k, l) \in \binom{[d+3]}{2} \setminus \binom{[d]}{2}$ we have $(k, l) < (i, j)$, and
		\item for any $(i, j)\in [d]\times \{d,d+1,d+2\}$ and $(k, l) \in \binom{[d+3]}{2}\setminus([d]\times \{d,d+1,d+2\})$ we have $(i, j) < (k, l)$.
	\end{enumerate}
\end{prop}
These conditions simply mean that the largest elements are the ones in $\binom{[d]}{2}$ and the smallest are the ones in $[d]\times \{d,d+1,d+2\}$.
\begin{proof}
	For $d=4$ we have the following example which was found via a computer search, as detailed in Section \ref{sec:compu}.
	\begin{align*}
		P&=\{(16,39,7,51),(26,14,0,9),(48,39,47,49),(68,17,27,10),\\
		&\qquad (22,13,37,23),(40,54,18,17),(55,18,8,45)\}.
	\end{align*}
	For the general case $d\ge 5$, consider the space $\R^d$ as the set of points in $\R^{d+1}$ whose coordinates add up to $1$.
	
	Let $X=\{x_0,\dots,x_{d}\}$ be the canonical basis in $\R^{d+1}$ so that $\norm{x_i-x_j}=\sqrt{2}$ whenever $i<j$.
	Let $c=(x_0+\dots+x_{d})/(d+1)$ and, for $i\in[3]$, let
	\begin{equation}
		y_i = c+\sqrt{2}\left(\frac{x_{2i}+x_{2i+1}}{2}-c\right)
	\end{equation}
	so that the vectors in $Y=\{y_0,y_1,y_2\}$ form an equilateral triangle of side-length
	\begin{equation}
		\norm{y_1-y_0} = \sqrt{2}\frac{\norm{x_2+x_3-x_0-x_1}}{2}=\sqrt{2}.
	\end{equation}
	Thanks to the symmetry of our vectors, the norm of the difference between $x\in X$ and $y\in Y$ is either $\norm{y_0-x_0}$ or $\norm{y_0-x_2}$. A simple calculation shows that $\norm{y_0-x_0}^2=2-\sqrt 2-\frac{3-2\sqrt 2}{d+1}<2-\sqrt{2}\approx 0.586$ and $\norm{y_0-x_2}^2 = 2-\frac{3-2\sqrt 2}{d+1}<2$.
	In any case, we have that $\norm{y-x}<\sqrt{2}$.
	
	Note that all the points in $X$ and $Y$ have coordinates that add up to $1$ and the set $X\cup Y$ has one more point than needed. So, the desired point set $P$ can be obtained by a suitable perturbation of the terminal points of the vectors in $(X\setminus\{x_d\})\cup Y$. To make this more precise, for a suitably small $\varepsilon>0$ (depending on $d$) we may choose numbers $\varepsilon_0,\dots,\varepsilon_{d+2}$ uniformly at random in $(0,\varepsilon)$ use the vectors $\{(1+\varepsilon_i) x_i:i\in[d]\}$ and $\{(1-\varepsilon_{d+i})y_i:i\in[3]\}$. In this way the three points in $Y$ are moved closer together while the points in $X$ are moved apart while maintaining the required conditions.
\end{proof}

\section{Computational experiments}\label{sec:compu}

Given $d,n,m$ we aim to construct examples of sets of points in $\mathbb{R}^d$ corresponding to given orders on $[n]\times [m]$ by formulating the construction as an optimization problem.

For a fixed permutation $p$ of $[n]\times [m]$ do the following.
\begin{enumerate}
	\item Start with two random sets of points in $\mathbb{R}^d$, $A$ and $B$ of sizes $n$ and $m$ respectively.
	\item Compute all pairwise distances between $A$ and $B$. Then, construct a permutation $p'$ associated with the order of these pairwise distances.
	\item Consider the error as follows: 
	$$E(A,B,p) = L_1(p,p') + D(A,B) + C(A,B),$$
	where $L_1$ is the Manhattan distance between the permutations $p = (n_0,n_1,\dots n_d)$ and $p' = (n_0',n_1',\dots n_d')$ defined by $L_1(p,p')=\sum_{i=0}^{d}\lvert n_i-n_i'\rvert$, $D$ penalizes cases where pairwise distances become too similar, and $C$ penalizes instances where two points in $A\cup B$ are too close to each other. Both penalties help prevent optimization stagnation and mitigate numerical errors by avoiding ambiguity from nearly identical numerical values.
	\item Optimize this function and stop when it reaches $0$, as this means an example has been found.
\end{enumerate}
We used differential evolution with parameters chosen heuristically, and ran thousands of simulations on a GPU (Nvidia RTX 2080 Ti). The implementation we used can be found on \href{https://github.com/mraggi/DistancesInOrder}{https://github.com/mraggi/DistancesInOrder}. 

Here is a summary of some results that were obtained by using the above method:
\begin{itemize}
	\item For $d=2$, $n=3$ and $m=3$, we were able to show that all $9!$ permutations are realizable by explicitly constructing a point set that realizes each permutation.
	\item For $d=2$, $n=3$, and $m=4$, out of $10\,000$ random permutations, we found $11$ for which we were not able to produce a proper set of points. In contrast, when sampling permutations uniformly at random, the expected number of {\diagmon} permutations is $1/15\,120$.
	\item For $d=3$, $n=4$, and $m=4$, we managed to produce a set of points that realizes each of the $10\,000$ random permutations considered.
	\item For $d=3$, $n=4$, and $m=5$, out of $10\,000$ random permutations, there were $26$ permutations cases for which we did not manage to produce a set of points which realizes them.
\end{itemize}

Given this evidence, we conjecture that all permutations of $[d+1]\times[d+1]$ can be realized in $\R^d$.

\section*{Acknowledgments}

This work was supported by UNAM-PAPIIT IN111923.

\bibliographystyle{amsalpha}
\bibliography{refs}

\end{document}